\documentclass{amsart}

\usepackage{amssymb}
\usepackage{color}
\usepackage{tikz-cd}
\usepackage{stmaryrd}
\usepackage{hyperref}
\usepackage{ulem}
\hypersetup{
    colorlinks=true,
    linkcolor=black,
    filecolor=blue,      
    urlcolor=blue,
    citecolor=blue,
    pdfpagemode=FullScreen,
    }
\usepackage[margin=1.5 in, top=1.3in, bottom=1.3 in]{geometry}
\usepackage{enumitem}

\newtheorem{thm}{Theorem}[section]
\newtheorem{lem}[thm]{Lemma}

\newtheorem{prop}[thm]{Proposition}

\newtheorem{thm*}{Theorem}

\theoremstyle{definition}

\theoremstyle{remark}
\newtheorem{remark}[thm]{Remark}

\numberwithin{equation}{section}

\newcommand{\R}{{\mathbb{R}}}
\newcommand{\Z}{{\mathbb{Z}}}
\newcommand{\C}{{\mathbb{C}}}
\newcommand{\Q}{{\mathbb{Q}}}

\newcommand{\overbar}{\overline}
\newcommand{\om}{\omega}

\newcommand{\fix}{\mathrm{Fix}}

\DeclareMathOperator{\supp}{\mathrm{supp}}

\DeclareMathOperator{\Ham}{\mathrm{Ham}}

\DeclareMathOperator{\Symp}{\mathrm{Symp}}

\DeclareMathOperator{\hn}{\mathrm{HN}}

\DeclareMathOperator{\hfn}{\mathrm{HFN}}

\DeclareMathOperator{\h}{\mathrm{H}}

\DeclareMathOperator{\flux}{\mathrm{Flux}}

\DeclareMathOperator{\cz}{\mathrm{CZ}}

\newcommand\blfootnote[1]{
	\begingroup
	\renewcommand\thefootnote{}\footnote{#1}
	\addtocounter{footnote}{-1}
	\endgroup
}

\newcommand{\mbmail}{marta.batoreo@ufes.br}
\newcommand{\bfmail}{brayan.ferreira@ufes.br}
\newcommand{\msaemail}{msatallah@ime.usp.br}

\begin{document}

\title{Remarks on Periodic Points, Flux, and the Mean Index in Symplectic Dynamics}

\author{Marcelo S. Atallah}
\address{Marcelo S. Atallah, Universidade de São Paulo, Departamento de Matemática/IME , Rua do Matão, 1010 – São Paulo – SP – Brasil, 05508-090}
\email{\msaemail}

\author{Marta Batoréo}
\address{Marta Batoréo, Universidade Federal do Espírito Santo, Departamento de Matemática/CCE, 
Av. Fernando Ferrari, 514, Goiabeiras – Vitória – ES, Brasil, 29075-910}
\email{\mbmail}

\author{Brayan Ferreira}
\address{Brayan Ferreira, Universidade Federal do Espírito Santo, Departamento de Matemática/CCE, 
Av. Fernando Ferrari, 514, Goiabeiras – Vitória – ES, Brasil, 29075-910}
\email{\bfmail}


\begin{abstract}
It is shown that a symplectic isotopy with non-degenerate end-point on a closed symplectically Calabi--Yau manifold has simple periodic points of arbitrarily large period under a simple non-vanishing condition on the Morse--Novikov homology associated with its flux. The proof combines a sharp estimate for the Conley--Zehnder index under the assumption that the mean index is an even integer with a classical asymptotic iteration argument in the non-Hamiltonian Floer-theoretic setting. We also apply this estimate to the symplectic dynamics of the standard symplectic $4$-torus.

\end{abstract}

\maketitle

\section{Introduction and main results}
Two well-known conjectures in Hamiltonian dynamics concern the existence of infinitely many periodic points. The Conley conjecture asserts that, for a large class of symplectic manifolds, every Hamiltonian diffeomorphism has infinitely many periodic points. For a complementary class of symplectic manifolds, the Hofer--Zehnder conjecture predicts the same conclusion provided that the number of fixed points exceeds that required by the Arnol'd conjecture. See \cite{ginzburg2010conley, ginzburg2009action, ginzburg2012conley, ginzburg2019conley} and \cite{shelukhin2022hofer, atallah2025hz, bai2024hz} for recent result


By contrast, in symplectic (non-Hamiltonian) dynamics, there are many examples of symplectic diffeomorphisms, isotopic to the identity, with any prescribed number of periodic points \cite{LeCalvez_conservative2022, atallah2024number}. On the other hand, for a closed surface of positive genus $g$, a symplectic diffeomorphism isotopic to the identity and having more than $2g-2$ (Floer-theoretically visible) fixed points must possess simple periodic points of arbitrarily large period \cite{le2006periodic, atallah2024number}. 


The objective of this work is to present a class of symplectic manifolds for which a non-Hamiltonian variant of the Conley conjecture holds. Our main results establish, in particular, that any non-degenerate symplectic diffeomorphism, which is isotopic to the identity, on the product of a closed surface of genus $g\geq2$ and a closed symplectically Calabi--Yau manifold with trivial first Betti number, has simple periodic points of arbitrarily large period. This follows from a more general statement proving the same conclusion for any non-degenerate symplectic diffeomorphism $\psi$ on a closed symplectically Calabi--Yau manifold such that the Morse--Novikov homology associated with the flux of an isotopy $\psi_t$ with endpoint $\psi$ is non-trivial in degree $2n-1$. 

Our approach also yields a result on the standard symplectic 4-torus. Namely, it is shown that any non-degenerate symplectic diffeomorphism admitting a fixed point with even mean index possesses simple periodic points of arbitrarily large period. This result is closer in spirit to the Hofer--Zehnder conjecture.

\subsection{Morse--Novikov homology of {the} flux and infinitely many periodic points} 

Let $(M,\omega)$ be a closed symplectic manifold and $\psi$ a symplectic diffeomorphism isotopic to the identity. Given a symplectic isotopy $\psi_t \in \Symp(M,\omega)$, $t \in [0,1]$, based at the identity and with endpoint $\psi_1=\psi$, let $X_t$ be the time-dependent vector field defined by $\frac{d}{dt}\psi_t = X_t \circ \psi_t.$ The $1$-form $\imath_{X_t}\omega$ is closed, and the flux homomorphism is defined by

\begin{align*}
\flux \colon \widetilde{\Symp_0}(M,\om) \to \h^1(M;\R),\quad[\psi_t] \mapsto \left[\int_0^1 \omega(\cdot, X_t)dt\right]
\end{align*}

on the universal cover of the identity component $\Symp_0(M,\om)$ of the group of symplectomorphisms of $M$. The kernel of this homomorphism is the universal cover of the group of Hamiltonian diffeomorphisms $\Ham(M,\omega)$. Recall that $\psi$ is a Hamiltonian diffeomorphism precisely when the isotopy $\psi_t$ can be chosen so that $\omega(\cdot, X_t) = dH_t$ for a Hamiltonian $H \colon \R \times M \to \R$.

Given a fixed point $x$ of $\psi$, we say that $x$ is contractible with respect to the isotopy $\psi_t$ if the loop $\{\psi_t(x)\}$ is contractible in $M$. We denote the set consisting of these points by $\fix_0(\psi)$. Our first result guarantees the existence of simple periodic points of $\psi$ of arbitrarily large prime period $p$ that are contractible with respect to $\psi_t^p$, under a non-vanishing condition on the Morse--Novikov homology associated with $\flux(\psi_t) \in \h^1(M;\R)$.

\begin{thm}\label{thm:main}
Let $(M,\omega)$ be a closed symplectically Calabi--Yau manifold of dimension $2n$, where $n\geq2$. Suppose that $\psi_t$ is a symplectic isotopy of flux $\theta \in \h^1(M;\R)$, based at the identity and with non-degenerate endpoint $\psi$. If
\begin{equation}\label{eq:main}
\supp \hn_*(\theta) \cap \{0,1,2n-1,2n\} \neq \emptyset,
\end{equation}
 then $\psi$ has a simple $p$-periodic point which is contractible with respect to $\psi_t^p$, for each sufficiently large prime $p$.
\end{thm}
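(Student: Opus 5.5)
The plan is to argue by contradiction, combining Floer--Novikov homology of the isotopy with a mean-index and iteration argument, in the spirit of the Hamiltonian Conley conjecture for symplectically aspherical or Calabi--Yau manifolds.

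\textbf{Floer input.} For a symplectic isotopy $\psi_t$ with flux $\theta$ and non-degenerate endpoint, the Floer--Novikov homology $\hfn_*(\psi_t)$ is defined on contractible fixed points. Since $c_1=0$ it is $\Z$-graded by the Conley--Zehnder index, normalized so that a $C^2$-small isotopy gives degrees in $[0,2n]$ (shifted by $-n$ if one prefers the symmetric convention). By the known isomorphism with Morse--Novikov homology, $\hfn_*(\psi_t)\cong \hn_*(\theta)$. The same holds for the iterated isotopy $\psi_t^k$, whose flux is $k\theta$, and $\hn_*(k\theta)$ has the same support as $\hn_*(\theta)$ because the Novikov homology depends only on the ray of $\theta$. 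So, by hypothesis~\eqref{eq:main}, for every $k$ there is a contractible $k$-periodic orbit carrying a non-trivial class in one of the degrees $0,1,2n-1,2n$, that is, at the extreme ends of the range $[0,2n]$.

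\textbf{Contradiction set-up.} Suppose that for some arbitrarily large prime $p$ there is no simple contractible $p$-periodic point. Then every contractible fixed point of $\psi^p$ is the $p$-th iterate of a contractible fixed point of $\psi$, since $p$ is prime and contractibility of the $p$-th iterate loop passes down for $p$ large. There are only finitely many such points $x_1,\dots,x_m$ by non-degeneracy, with mean indices $\Delta_j$. I will use the standard iteration estimate $|\cz(x^k)-k\Delta(x)|\le n$, together with its sharpening: if $x^k$ is non-degenerate and $\Delta(x^k)$ is an even integer, then $|\cz(x^k)-\Delta(x^k)|\le n-1$, the mean-index estimate announced in the abstract. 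Take $p$ large so that each $x_j^p$ is non-degenerate, which excludes finitely many roots of unity.

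\textbf{Case analysis.} Fix $j$. If $\Delta_j\ne 0$, then $|\cz(x_j^p)|\ge p|\Delta_j|-n$, which eventually lies outside $[-n,n]$, the whole window containing degrees $0,\dots,2n$ after the normalization shift. So such orbits cannot carry the classes for large $p$. If $\Delta_j=0$, it is even, and the sharp estimate gives $\cz(x_j^p)\in[-(n-1),n-1]$. In the unshifted grading this is $[1,2n-1]$, which is too weak, since it still allows degrees $1$ and $2n-1$.

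To rule out degrees $1$ and $2n-1$, I will pass to the chain level. Choose $p$ so that the iterates have index in $[1,2n-1]$ and, by a Dirichlet-type choice of $p$, the index of $x_j^p$ stays away from the boundary values in the generic situation. Alternatively, I would strengthen the claim to this: an orbit with $\Delta=0$ reaching index exactly $n-1$ from the mean would have all eigenvalues of the linearization on the unit circle in a specific ``maximally rotated'' configuration. For large $p$ that configuration cannot persist under iteration, so equality fails for suitable primes; this uses that primes are equidistributed mod any $q$. With all contributing generators then confined to degrees $2,\dots,2n-2$ while homology is non-zero in $\{0,1,2n-1,2n\}$, we obtain the contradiction.

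\textbf{Main obstacle.} The hard part is exactly this endpoint issue: making the sharp estimate exclude degrees $1$ and $2n-1$, not just $0$ and $2n$. I expect the precise form of the even-mean-index estimate is designed to do this, perhaps as $|\cz-\Delta|\le n-2$ when $\Delta$ is even and $n\ge2$, and I would first try to prove that via the normal form of the linearized symplectic path.
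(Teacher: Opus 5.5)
Your overall structure matches the paper's. You argue by contradiction at a large prime $p$ and use $\Delta(x^p)=p\,\Delta(x)$ to push every orbit with $\Delta(x)\neq 0$ out of the window $[-n,n]$. You then use the non-vanishing of $\hfn(\psi^p_t)$ in a degree $k\in\{-n,-n+1,n-1,n\}$, which comes from the degree-wise rank equality with $\hn_{k+n}(p\theta)$ and $\hn(p\theta)=\hn(\theta)$. The gap is exactly the step you flag yourself. For mean-index-zero orbits, the only estimate you actually use is $|\cz-\Delta|\le n-1$. That is just the standard bound $|\cz-\Delta|<n$ applied to integers, and as you note it does not exclude degrees $\pm(n-1)$.

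Your proposed fallback does not close the gap. Your contradiction hypothesis fixes a specific infinite set of bad primes, so you cannot then pick $p$ in a Dirichlet progression. Even if you could, you would only get simple periodic points for infinitely many primes, not for each sufficiently large prime as the theorem asserts. The claim that a ``maximally rotated configuration'' with $|\cz-\Delta|=n-1$ ``cannot persist under iteration'' is also unproved; in fact such a configuration never occurs when $\Delta$ is even.

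The missing ingredient is the estimate you guess at the end, which is the paper's Theorem~\ref{thm:zeromean}: if $\Delta$ is an even integer, then $|\cz-\Delta|\le n-2$. It gives $|\cz(x^p)|\le n-2$ for every generator with $\Delta(x^p)=0$, so the argument closes for every prime $p\ge p_0$ with no number theory. You need no normal forms for this, only Lemma~\ref{lemma:cz}. Let $\theta_j$ be the endpoints of the continuous lifts of the characteristic exponents along $\Psi_x$, starting from $0$. Then $\cz=\sum_j[[\theta_j]]$ and $\Delta=\sum_j\theta_j$.

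The paper shifts the $\theta_j$ by even integers so that they sum to zero. It then proves $|\sum_i[[\alpha_i]]|\le n-2$ whenever $\sum_i\alpha_i=0$ (Lemma~\ref{lemma:meanzero}), by induction using that $[[\cdot]]$ is odd and that $|[[\alpha+\beta]]-[[\alpha]]-[[\beta]]|\le 1$ (Lemma~\ref{lem1}).

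A direct parity count also works. Let $r$ be the number of non-integral $\theta_j$. The integer $\cz-\Delta=\sum_{\theta_j\notin\Z}([[\theta_j]]-\theta_j)$ has absolute value at most $\max(r-1,0)$, which is at most $n-2$ when $r\le n-1$. When $r=n$, every $[[\theta_j]]$ is odd, so $\cz\equiv n \pmod 2$ while $\Delta$ is even. Hence $\cz-\Delta\equiv n\pmod 2$, which rules out $|\cz-\Delta|=n-1$.
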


The proof of Theorem \ref{thm:main} relies on a refinement of the standard comparison between the mean index and the Conley--Zehnder index in the special case of vanishing mean index; see Theorem \ref{thm:zeromean} below. This allows us to apply a classical asymptotic iteration argument without relying on the presence of the fundamental class, which never exists in the non-Hamiltonian case. In fact, the argument only requires that the support of the Floer--Novikov homology of $\psi_t$ is not contained in the interval $[-(n-2),n-2]$. This follows from \eqref{eq:main}; see Section \ref{sec:proof} for details.

\begin{remark}\label{rmk:Hamiltoniancase}
In the case where the isotopy $\varphi_t^H$ is Hamiltonian, the hypothesis \eqref{eq:main} is automatically satisfied. Indeed, $$\hn_{2n}(\flux(\varphi_t^H)) = \hn_{2n}([dH_t]) = \h_{2n}(M) \neq 0.$$  In this setting, the weakly-non-degenerate Hamiltonian case was shown in \cite{salamon1992morse}.
\end{remark}

As {{a}} consequence of Theorem \ref{thm:main}, we find a large class of examples of manifolds for which a non-Hamiltonian version of the Conley conjecture holds.

\begin{thm}\label{thm:examples}
Let $(\Sigma,\omega_{\Sigma})$ be a closed surface of genus $g\geq 2$, and let $(Q^{2n-2},\omega_Q)$ be a closed symplectically Calabi--Yau manifold such that {the first Betti number of $Q$, $b_1(Q),$ is zero}. Then, any non-degenerate symplectomorphism $\psi \in \Symp_0(\Sigma \times Q, \omega_{\Sigma}\oplus\omega_Q)$ has a simple $p$-periodic point for each sufficiently large prime $p$.
\end{thm}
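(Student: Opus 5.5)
The plan is to deduce Theorem \ref{thm:examples} from Theorem \ref{thm:main}. That requires two things: checking that $M=\Sigma\times Q$ is symplectically Calabi--Yau, and checking that for every isotopy $\psi_t$ from the identity to $\psi$, the Morse--Novikov homology of its flux $\theta$ is non-zero in one of the degrees $0,1,2n-1,2n$.

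\emph{Calabi--Yau condition.} We have $c_1(T(\Sigma\times Q)) = \pi_\Sigma^*c_1(T\Sigma)+\pi_Q^*c_1(TQ)$. On $\pi_2$, the condition only depends on how $c_1$ evaluates on spheres. Since $g\geq 1$, the universal cover of $\Sigma$ is contractible, so $\pi_2(\Sigma)=0$ and every sphere in $M$ projects to a nullhomotopic sphere in $\Sigma$. Hence $c_1(TM)$ and $\omega$ restrict on $\pi_2(M)=\pi_2(Q)$ to $c_1(TQ)$ and $\omega_Q$. If the paper uses the stronger notion $c_1=0$ in cohomology, the same splitting still does not give it, because $c_1(T\Sigma)=2-2g\neq0$. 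So I will use whatever version of "symplectically Calabi--Yau" Theorem \ref{thm:main} actually needs; presumably it is the condition on $\pi_2$ (vanishing of $c_1$ on spheres), which is all that grading and index arguments require. I would make this precise before going further.

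\emph{Flux and Novikov homology.} By Künneth and $b_1(Q)=0$, we get $\h^1(M;\R)=\pi_\Sigma^*\h^1(\Sigma;\R)$, so $\theta=\pi_\Sigma^*\alpha$ for some $\alpha\in \h^1(\Sigma;\R)$. Choose a closed representative of the form $\pi_\Sigma^*a$. The Novikov complex of $\pi_\Sigma^*a+df_Q$, with $f_Q$ a Morse function on $Q$, is the tensor product of the Novikov complex of $a$ on $\Sigma$ with the Morse complex of $Q$, over the Novikov field. A Künneth formula over that field therefore gives
\[\hn_*(\theta)\cong \bigoplus_{i+j=*}\hn_i(\alpha)\otimes \h_j(Q).\]
Now split into cases. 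If $\alpha=0$, then $\hn_*=\h_*(M)$ and degree $2n$ is non-zero. If $\alpha\neq0$, then $\hn_0(\alpha)=\hn_2(\alpha)=0$. The Euler characteristic of Novikov homology is $\chi(\Sigma)=2-2g<0$, so $\hn_1(\alpha)$ has rank $2g-2>0$. Tensoring with $\h_{2n-2}(Q)$ gives a non-zero class in degree $2n-1$, so \eqref{eq:main} holds in every case.

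\emph{Conclusion.} Apply Theorem \ref{thm:main} to get simple $p$-periodic points for all large primes $p$. The main obstacle I expect is the Künneth step for Novikov homology with an arbitrary (possibly irrational) class $\alpha$. Because the Novikov field depends on the period group of $\alpha$, I would argue directly with the product of a Novikov-generic form on $\Sigma$ and a Morse function on $Q$. The Calabi--Yau bookkeeping is the other point that needs care.
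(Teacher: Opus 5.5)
Your proposal is correct and follows the paper's proof. Both arguments use $b_1(Q)=0$ to write the flux as a class pulled back from $\Sigma$, combine the K\"unneth formula with Proposition~\ref{prop:HNranksurfaces} and $\h_{2n-2}(Q)\neq 0$ to get $\hn_{2n-1}(\theta)\neq 0$, and then invoke Theorem~\ref{thm:main}. Your additional steps fill in points the paper leaves implicit: the check that $c_1(TM)|_{\pi_2(M)}=0$ via $\pi_2(\Sigma)=0$ (this is exactly the paper's notion of symplectically Calabi--Yau, so your hedge resolves in your favour), the separate treatment of exact flux via degree $2n$ (Proposition~\ref{prop:HNranksurfaces} as stated only covers non-exact forms), and the justification of K\"unneth on the cover $\overline{\Sigma}\times Q$.
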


Examples of such manifolds $Q$ are given by $K3$ surfaces and by complex degree $d=n+1$ smooth hypersurfaces in $\mathbb{C}P^n$, with $n\geq 3$. The latter are simply connected, since $n\geq 3$, and symplectically Calabi--Yau, since the first Chern class is $(n+1-d)h$, where $h$ is the hyperplane class in $H^2(\mathbb{C}P^n,\mathbb{Z})$.

\begin{proof}[Proof of Theorem \ref{thm:examples}]
Let $\psi_t$ be any symplectic isotopy based at the identity with endpoint $\psi$. Suppose $\psi_t$ has flux $$\theta=(\theta_{\Sigma}, 0_Q)\in \h^1(\Sigma\times Q;\R) = \h^1(\Sigma;\R).$$

By the K\"{u}nneth formula, we have
\begin{align*} 
\hn_{2n-1}(\theta)=\hn_1(\theta_\Sigma) \otimes \h_{2n-2}(Q) \oplus \hn_2(\theta_\Sigma) \otimes \h_{2n-3}(Q) \neq 0.
\end{align*}
Here we used the facts that $\h^1(Q;\R) = 0$, $\hn_*(0_Q) = \h_*(Q)$ and $\hn_1(\theta_\Sigma) \neq 0$; see Proposition \ref{prop:HNranksurfaces} below. Moreover, we have $\h_{2n-2}(Q) \neq 0$ because of the existence of a fundamental class. Therefore, by Theorem \ref{thm:main}, $\psi$ has a simple $p$-periodic point which is contractible with respect to $\psi_t^p$, for each sufficiently large prime $p$.
\end{proof}

\begin{remark}
The same argument proves the stronger statement that the conclusion continues to hold when $b_1(Q)\neq0$ provided that the $Q$-projection of the flux $\theta_Q$ is trivial.
\end{remark}

\begin{remark}
In the setting of the proof, the flux group is always trivial. In particular, the homotopy class of the orbits, i.e. $\{\psi_t(x)\}, x\in\fix(\psi)$ is independent of the isotopy. 
\end{remark}

On the other hand, in $\Sigma\times S^2$, there are many examples of symplectic diffeomorphisms isotopic to the identity with only finitely many periodic points. For instance, one may take the product of a non-degenerate symplectic diffeomorphism $\psi_\Sigma$ on $\Sigma$ having finitely many periodic points, as in \cite{atallah2024number}, with an irrational rotation. Thus, the Calabi--Yau hypothesis cannot be removed. Similarly, on $\Sigma\times\mathbb{T}^2$, one may take the product of $\psi_\Sigma$ with an irrational translation of $\mathbb{T}^2$, showing that hypothesis \eqref{eq:main} is also necessary. 

\subsection{An application in dimension $4$}

An interesting consequence of our estimate in dimension $4$ is that when the mean index is an even integer, it equals the Conley–Zehnder index; see Theorem \ref{thm:zeromean} below. This observation allows us to prove that the existence of a fixed point with even mean index forces the existence of simple periodic points of arbitrarily large period.

\begin{thm}\label{thm:t4}
Let $\psi \in \Symp_0(\mathbb{T}^4,\omega)$ be non-degenerate. Suppose that $\psi$ has a fixed point $x_0$ such that its mean index is an even integer. Then $\psi$ has a simple $p$-periodic point for every sufficiently large prime $p$.
\end{thm}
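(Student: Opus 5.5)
We argue by contradiction, combining the four-dimensional consequence of Theorem \ref{thm:zeromean} with the standard asymptotic iteration argument in Floer--Novikov homology.

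\textbf{Normalization.} Fix a symplectic isotopy $\psi_t$ from the identity to $\psi$, with flux $\theta$. Since $\mathbb{T}^4$ is symplectically Calabi--Yau, Conley--Zehnder and mean indices of fixed points are well defined; they are taken with respect to the homotopy class of the orbit $\{\psi_t(x_0)\}$. Since $\pi_1(\mathbb{T}^4)$ is abelian and the flux group of $\mathbb{T}^4$ is all of $H^1(\mathbb{T}^4;\R)$ (translations), we can compose $\psi_t$ with a loop of translations. This changes the class of the orbit of $x_0$ and leaves the mean index unchanged (translations have trivial linearization). In this way we may assume that $x_0$ is contractible with respect to $\psi_t$. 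We then work in the Floer--Novikov homology $\hfn_*(\psi_t)$ generated by $\fix_0(\psi)$, and in its iterates $\hfn_*(\psi_t^k)$.

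\textbf{Index equality.} Let $\Delta(x_0)=2m$ be the even mean index. By Theorem \ref{thm:zeromean} in dimension $4$, an even-integer mean index equals the Conley--Zehnder index, so $\cz(x_0)=2m$. Because $\Delta(x_0^k)=2mk$ is again even, the same theorem gives $\cz(x_0^k)=2mk$ for all $k$.

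\textbf{Contradiction setup.} Suppose that for infinitely many primes $p$ there is no simple $p$-periodic point. Since $\psi$ is non-degenerate, there are finitely many fixed points. For a prime $p$ such that every iterate $x^p$ of a fixed point is non-degenerate (true for all but finitely many $p$), the complex for $\psi_t^p$ in the contractible class is then generated only by the iterates $x^p$ of fixed points $x$ whose $p$-th iterated orbit is contractible. The classical Salamon--Zehnder-type argument now applies.

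\begin{itemize}
\item[(a)] \textbf{Iterates with nonzero mean index.} For each fixed point with $\Delta(x)\neq 0$, the estimate $|\cz(x^p)-\Delta(x^p)|\le n=2$ gives $|\cz(x^p)|\to\infty$ linearly in $p$.
\item[(b)] \textbf{Iterates with zero mean index.} For $\Delta(x)=0$, Theorem \ref{thm:zeromean} gives $|\cz(x^p)|\le n-2=0$, so $\cz(x^p)=0$.
\item[(c)] \textbf{Cancellation.} Hence, for large admissible $p$, the generators lie in degrees far from $\{-1,1\}$, together with degree $0$ for zero-mean points. A degree argument should kill the homology: one shows that $\hfn_*(\psi_t^p)$ vanishes in the relevant range, or has the wrong shape, for instance by comparing with $\hn_*(p\theta)$.
\end{itemize}

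Here the even mean index is what matters. If $m\neq0$, the generator $x_0^p$ sits in the isolated even degree $2mk$, within a band where the only other generators differ in degree by at least $2$ once $p$ is large, since all mean indices enter linearly. Thus $x_0^p$ survives in homology, and its image should be forced by an isomorphism of the iterated homology with $\hn_*(p\theta)$. That group has support in $[0,4]$ shifted by $-2$, that is, in $[-2,2]$, which gives a contradiction. If $m=0$, we use instead the splitting of degrees into zero and large values.

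\textbf{Main obstacle.} I expect the hardest step to be turning the survival of $x_0^p$, or a homology class, into a contradiction. This needs the isomorphism $\hfn_*(\psi_t^p)\cong\hn_{*+n}(p\theta)$ and control of which classes can be realized. The subtle case is $\hn_*(\theta)=0$, which can occur on $\mathbb{T}^4$ for nonzero $\theta$. There the homology is zero, and one must instead use a local argument: the action/index filtration, or a local Floer homology isolation of the degree-$2mk$ generator, should show that $x_0^p$ cannot be cancelled by iterates of other fixed points. I would first attempt to show that the differential on $x_0^p$ must pair it with an iterate $y^p$ with $\cz(y^p)=2mk\pm1$. Then $\Delta(y)=2m$ up to an error tending to zero, which forces $\Delta(y)=2m$ exactly, so $\cz(y^p)=2mk$ by Theorem \ref{thm:zeromean}, contradicting $\cz(y^p)=2mk\pm1$.
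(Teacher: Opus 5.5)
Your argument is correct and follows the paper's proof. Your final paragraph is the paper's argument: after a loop of translations makes $x_0$ contractible, the Hamiltonian case is Theorem~\ref{thm:main}, and for non-zero flux $\hfn(\psi_t^p)=0$ forces a generator $y^p$ in degree $\cz(x_0^p)\pm1$, whose mean index must then equal $\Delta(x_0)$, contradicting Theorem~\ref{thm:zeromean}. On $\mathbb{T}^4$ every non-exact class has vanishing Novikov homology, not just some, so this dichotomy is exhaustive and your steps (a)--(c) and the $m\neq0$ support argument are unnecessary; also, the flux group of $\mathbb{T}^4$ is a lattice rather than all of $\h^1(\mathbb{T}^4;\R)$, though you only use that loops of translations can change the class of the orbit of $x_0$.
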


\begin{remark}
The same proof applies to any closed $4$-dimensional symplectically Calabi--Yau manifold, provided that $\psi$ is the endpoint of a symplectic isotopy $\psi_t$ such that $\Delta(\psi_t,x_0)$ is even and the Morse--Novikov homology associated with its flux vanishes in degree $2$. By the K\"{u}nneth formula, examples include products $\Sigma\times\mathbb{T}^2$, where $\Sigma$ is a closed surface of positive genus.
\end{remark}

Finally, we note that, on $\mathbb{T}^{2n}$, the Morse--Novikov homology associated with any non-exact closed $1$-form vanishes. It is therefore natural to expect that the existence of a non-degenerate fixed point is already sufficient to obtain the same conclusion.

\subsection{Acknowledgments} We thank Alberto Abbondandolo and Pietro Majer for sharing unpublished notes containing the description and properties of the mean index and the Conley–Zehnder index we use in this work. We also thank Viktor Ginzburg, Başak G\"urel, and Egor Shelukhin for their interest, suggestions and encouragement in this project. This work began at the \textit{International Conference on Symplectic Dynamics} at SUSTech, Shenzhen, and was partially developed during a visit of M.S.A. to M.B. and B.F. at UFES. We are grateful to both organizations for their support. \\ B.F. is grateful for the hospitality of the IME-USP, where part of this work was completed.

\section{Morse--Novikov homology and Floer--Novikov homology}
In this section, we briefly recall the properties of Morse--Novikov homology and Floer--Novikov homology that will be used in this work. We closely follow \cite{atallah2024number}, to which we refer the reader for further details.

\subsection{Morse--Novikov Homology}
Let $\theta$ be a closed $1$-form on a smooth manifold $M$, and let $\pi \colon \overline{M} \to M$ be the covering associated with the kernel of the period homomorphism
$$
I_\theta\colon \pi_1(M) \to \R,\qquad [c] \mapsto \int_c \theta.
$$
Then $\pi^*\theta$ is exact on $\overline{M}$, and the group of deck transformations is $\Gamma_\theta=\pi_1(M)/\ker I_\theta.$ Assume that $\pi^*\theta=df_\theta$ for a Morse function $f_\theta\colon\overline{M}\to\R$. The Morse--Novikov homology $\hn(\theta)$ is a $\Q$-vector space and a finitely generated free module over the Novikov ring
\begin{align*}
	\Lambda_{\theta}
	=
	\Big\{
	\sum_i a_i g_i
	\ \vert\
	a_i\in\Q,\ g_i\in\Gamma_\theta,\ \text{satisfying (*)}
	\Big\},
\end{align*}
where (*) requires that, for every $c\in\R$, the set $\{i\ \vert\ a_i\neq0,\ I_\theta(g_i)>c\}$ be finite.

The homology $\hn(\theta)$ depends only on the cohomology class $[\theta]\in\h^1(M;\R)$. More precisely, if $[\theta_1]=[\theta_2]$, then $\hn_k(\theta_1)=\hn_k(\theta_2)$ as modules over $\Lambda_{\theta_1}=\Lambda_{\theta_2}$ for every $k$. If $\theta$ is exact, then $\hn_*(\theta)$ recovers the singular homology $\h_*(M)$ with rational coefficients.

The Euler characteristic $\chi(\theta):=\sum_i(-1)^i\dim_{\Lambda_\theta}\hn_i(\theta)$ coincides with $\chi(M)$; see, for example, \cite[Appendix C]{van1995symplectic}. For non-exact forms, we have the following:

\begin{prop}[{\cite[Proposition 4.12]{ono2006floer}}]\label{prop:vanishHN}
	Let $\theta$ be a closed non-exact $1$-form on a closed smooth manifold $N$. Then
	$
	\dim_{\Lambda_\theta}\hn_k(\theta)=0
	$
	for $k=0$ and $k=\dim N$.
\end{prop}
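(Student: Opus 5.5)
The plan is to prove the vanishing in degree $0$ directly at the chain level of the Novikov complex, and then get degree $\dim N$ from degree $0$ by Poincaré duality. Set $m=\dim N$ and choose $\theta$ in its cohomology class to be a Morse form, which is allowed since $\hn(\theta)$ depends only on $[\theta]$. Fix a generic metric. The Novikov complex $CN_*(\theta)$ is then the free $\Lambda_\theta$-module generated by the zeros of $\theta$, with lifts to $\overline{M}$ chosen once and for all. Its differential counts negative gradient trajectories of $f_\theta$ on $\overline{M}$, weighted by deck transformations.

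\textbf{Degree $0$.} The key observation is that, because $\theta$ is non-exact, $\Gamma_\theta$ is nontrivial and $I_\theta$ embeds it into $\R$. So there is $g\in\Gamma_\theta$ with $I_\theta(g)\neq 0$, and then $1-g$ is invertible in $\Lambda_\theta$. Its inverse is a geometric series $\sum_{k\ge 0} g^{k}$ or $-\sum_{k\ge1}g^{-k}$, chosen according to the sign convention in (*) so that the series is admissible. One clean way to use this is the Novikov analogue of the fact that $\h_0$ counts components: $\hn_0(\theta)$ is the $0$-th homology of $N$ with local coefficients in $\Lambda_\theta$, twisted by $\pi_1(N)\to\Gamma_\theta\subset\Lambda_\theta^\times$. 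This identification holds because the Novikov complex computes homology with coefficients in the completed group ring $\Lambda_\theta$, by Novikov--Pajitnov--Latour comparison with the cellular complex of $\overline{M}$ tensored up to the completion. Granting that, $\h_0$ with local coefficients is the module of coinvariants $\Lambda_\theta/\langle (1-\gamma)\lambda\rangle$. Since some $1-\gamma$ is a unit, this quotient is $0$.

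\textbf{Degree $m$.} Apply the same argument to $-\theta$. The Novikov complex of $-\theta$ is the dual complex of that of $\theta$, with indices reversed $k\mapsto m-k$ and the Novikov ring conjugated by $g\mapsto g^{-1}$. Hence $\hn_m(\theta)$ is, up to this conjugation and a universal coefficient argument over the field-like ring $\Lambda_\theta$, dual to $\hn_0(-\theta)$. Alternatively one can argue directly with $\h_m$ with local coefficients, which consists of invariants and vanishes for the same reason. Either way the result is $0$. If $N$ is non-orientable, pass to the orientation local system; the same unit argument still applies. For the dimension statement, the Novikov ring for rational $\Q$-coefficients is a PID (a field when $\Gamma_\theta\cong\Z$), so rank zero follows.

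\textbf{Main obstacle.} The delicate step is the comparison between the Novikov complex and the completed local-coefficient homology, together with checking the completion convention so that $1-g$ really is invertible in $\Lambda_\theta$. If I preferred to avoid the comparison theorem, I would argue directly on the Morse complex. Place a minimum $q$ of $f_\theta$ (locally) and follow a closed loop $c$ with $\int_c\theta\ne0$. Then show that the generator $q$ satisfies $(1-g)q\in\operatorname{im}\partial$, using a chain of index-one critical points along $c$. Making that chain argument rigorous is exactly the comparison theorem in disguise, so I would cite it.
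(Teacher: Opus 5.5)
The paper does not prove this statement; it is quoted from Ono's Proposition 4.12. So there is no internal argument to compare against, and your proposal has to stand on its own.

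It does stand. Grant the Novikov--Pajitnov--Latour--Sikorav comparison $\hn_*(\theta)\cong H_*\bigl(C_*(\overline{M})\otimes_{\Q[\Gamma_\theta]}\Lambda_\theta\bigr)$. Right exactness of the tensor product then gives $\hn_0(\theta)\cong H_0(\overline{M};\Q)\otimes_{\Q[\Gamma_\theta]}\Lambda_\theta$. This is the quotient of $\Lambda_\theta$ by the ideal generated by the elements $1-g$, $g\in\Gamma_\theta$, and it vanishes because some $1-g$ is a unit.

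Degree $m$ follows in either of the two ways you give. One is the conjugated, orientation-twisted duality $CN_{m-*}(-\theta)\cong\mathrm{Hom}_{\Lambda_\theta}(CN_*(\theta),\Lambda_\theta)$ together with universal coefficients. The other is Poincar\'e duality with local coefficients, which identifies $H_m$ with invariants; a $g$-invariant $\lambda$ satisfies $(1\mp g)\lambda=0$, hence $\lambda=0$.

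Your route isolates the single mechanism behind the cited result: a loop of nonzero period acts on the coefficients by a non-trivial unit. Its cost is that the comparison theorem is a much heavier input than the statement itself, which is presumably why the paper just cites.

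Three small corrections.

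\begin{itemize}
\item $\Lambda_\theta$ is a field for every rank of $\Gamma_\theta$, not only when $\Gamma_\theta\cong\Z$. Because $I_\theta$ embeds $\Gamma_\theta$ into $\R$, a nonzero element has a leading term $a g_0$ with $I_\theta(g_0)$ maximal on its support. It therefore equals $a g_0(1-y)$, where $I_\theta\le-\epsilon<0$ on the support of $y$, and $\sum_{k\ge0}y^k$ converges in $\Lambda_\theta$. This makes your universal coefficient step immediate. It also shows that the completion convention you flag as delicate is irrelevant: $1-g\neq0$ for $g\neq1$, so it is a unit whichever direction the completion goes.
\item Connectedness of $N$ is used, since you take coinvariants and invariants under a single $\pi_1$. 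It is also implicit in the statement, because $\overline{M}$ is defined through $\pi_1$. If one worked componentwise on a disjoint union with $\theta$ exact on some component, that component would contribute nonzero classes in degrees $0$ and $m$.
\item Your closing paragraph, which builds $(1-g)q\in\mathrm{im}\,\partial$ by hand, is unnecessary once you cite the comparison theorem.
\end{itemize}
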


Combining Proposition \ref{prop:vanishHN} with the identity $\chi(\theta)=\chi(\Sigma)$ gives the following computation for closed surfaces.

\begin{prop}\label{prop:HNranksurfaces}
	Let $\Sigma$ be a closed orientable surface of genus $g\geq1$, and let $\theta$ be a closed non-exact $1$-form on $\Sigma$. Then $$\dim_{\Lambda_\theta}\hn_*(\theta)=\dim_{\Lambda_\theta}\hn_1(\theta)=-\chi(\Sigma)=2g-2.$$
\end{prop}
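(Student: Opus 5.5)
The plan is to combine Proposition \ref{prop:vanishHN} with the equality $\chi(\theta)=\chi(\Sigma)$ for the Euler characteristic over $\Lambda_\theta$.

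\textbf{Step 1: the extreme degrees vanish.} Since $\theta$ is a closed non-exact $1$-form on the closed manifold $\Sigma$, Proposition \ref{prop:vanishHN} gives
\[
\dim_{\Lambda_\theta}\hn_0(\theta)=0 \quad\text{and}\quad \dim_{\Lambda_\theta}\hn_2(\theta)=0 .
\]

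\textbf{Step 2: no other degrees contribute.} The Morse--Novikov complex is generated by the critical points of a Morse representative $f_\theta$ of $\pi^*\theta$ on $\overline{\Sigma}$. Taken over $\Lambda_\theta$, these generators correspond to the critical points of a Morse closed $1$-form on $\Sigma$. Their indices lie in $\{0,1,2\}$, so $\hn_k(\theta)=0$ for $k\notin\{0,1,2\}$. Hence
\[
\dim_{\Lambda_\theta}\hn_*(\theta)=\dim_{\Lambda_\theta}\hn_1(\theta).
\]

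\textbf{Step 3: compute the rank of $\hn_1(\theta)$.} Using the Euler characteristic identity recalled above (\cite[Appendix C]{van1995symplectic}), together with Step 1,
\[
\chi(\Sigma)=\chi(\theta)=-\dim_{\Lambda_\theta}\hn_1(\theta).
\]
Since $\chi(\Sigma)=2-2g$, this gives $\dim_{\Lambda_\theta}\hn_1(\theta)=2g-2$.

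\textbf{Where care is needed.} There is no real obstacle; the only points to check are these.
\begin{itemize}
\item $\hn(\theta)$ is a finitely generated free module over $\Lambda_\theta$, as stated above, so "dimension" means rank and is well defined.
\item For $g=1$ the formula gives rank $0$, which is consistent: every degree vanishes.
\end{itemize}
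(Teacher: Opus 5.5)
Your proof is correct and follows the paper's argument: Proposition \ref{prop:vanishHN} kills $\hn_0(\theta)$ and $\hn_2(\theta)$, and then $\chi(\theta)=\chi(\Sigma)=2-2g$ forces $\dim_{\Lambda_\theta}\hn_1(\theta)=2g-2$. Your Step 2 only spells out that generators occur in degrees $0,1,2$, which the paper leaves implicit.
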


\subsection{Floer--Novikov Homology}
Let $(M,\omega)$ be a closed symplectic manifold and $\psi\in\Symp_0(M,\om)$. Given a symplectic isotopy $\psi_t$, $t\in[0,1]$, based at the identity and with endpoint $\psi_1=\psi$, let $\flux(\psi_t)\in\h^1(M;\R)$ denote its flux. The Floer--Novikov homology $\hfn(\psi_t)$ is a semi-infinite-dimensional analogue of Morse homology associated with a suitable action functional.

 Let $\theta$ be a closed $1$-form representing $\flux(\psi_t)$. After composing with a loop of Hamiltonian diffeomorphisms if necessary, we may assume that
$
\omega(\cdot,X_t)=\theta+dH_t=:\theta_t,
$
where $H_t\colon M\to\R$, $t\in S^1=\R/\Z$, is a periodic Hamiltonian; see \cite[Lemma 2.1]{van1995symplectic}. The set $\mathcal{P}(\theta_t)$ of $1$-periodic solutions of $\dot{x}=X_t(x)$ is in bijection with the fixed points of $\psi=\psi_1$. An orbit $x\in\mathcal{P}(\theta_t)$ is \textit{non-degenerate} if the linear symplectic map
$
d\psi_{x(0)}\colon T_{x(0)}M\to T_{x(0)}M
$
does not have $1$ as an eigenvalue. The diffeomorphism $\psi$ is \textit{non-degenerate} if all such orbits are non-degenerate. In this case, compactness of $M$ implies that $\mathcal{P}(\theta_t)$ is finite.


Let $\mathcal{L}_0M$ denote the component of contractible loops. Define
$
\mathcal{I}_\theta=I_\theta\circ e_*\colon\pi_1(\mathcal{L}_0M)\to\R,
$
where $e_*\colon\pi_1(\mathcal{L}_0M)\to\pi_1(M)$ is induced by the evaluation map $e(x)=x(0)$. We also consider
\begin{align*}
	\mathcal{I}_{\omega}\colon\pi_1(\mathcal{L}_0M)\to\R,\qquad
	\{x_s\}\mapsto\int_{C(\{x_s\})}\omega,
\end{align*}
where $C(\{x_s\})$ is the ``torus'' swept out by $\{x_s\}$:
$
(s,t)\in S^1\times S^1\mapsto x_s(t)\in M.
$
Let
$
\Pi\colon\tilde{\mathcal{L}}\overbar{M}\to\mathcal{L}_0M
$
be the covering associated with
$
\ker\mathcal{I}_\om\cap\ker\mathcal{I}_\theta
\subset\pi_1(\mathcal{L}_0M).
$
Its points can be identified with equivalence classes $[x,u]$, where $x\in\mathcal{L}_0M$ and $u$ is a capping of $x$. Two pairs $(x,u)$ and $(y,v)$ are equivalent if and only if $x=y$ and
\begin{align*}
	\int_u\om=\int_v\om
	\quad\text{and}\quad
	\int_{l_{u,0}}\theta=\int_{l_{v,0}}\theta.
\end{align*}
Here, $l_{u,t}(s):=u(se^{2\pi it})$ for $s\in[0,1]$, and similarly for $l_{v,t}$. The deck transformation group is
$
\pi_1(\mathcal{L}_0M)/
\bigl(\ker\mathcal{I}_\om\cap\ker\mathcal{I}_\theta\bigr).
$
The action functional
$
\mathcal{A}_{\psi_t}\colon
\tilde{\mathcal{L}}\overbar{M}\to\R
$
is a distinguished primitive of the $1$-form $\alpha_{\psi_t}$.
Its critical points are the lifts to $\tilde{\mathcal{L}}\overbar{M}$ of the orbits in $\mathcal{P}(\theta_t)$. Roughly speaking, $\hfn(\psi_t)$ is the Floer homology of $\mathcal{A}_{\psi_t}$. The associated complex is generated by the classes $[x,u]$ and is a $\Q$-vector space and a finitely generated free module over
\begin{align*}
	\Lambda_{\om,\theta}
	=
	\Big\{
	\sum_i a_i g_i
	\ \vert\
	a_i\in\Q,\ g_i\in\Gamma_{\theta,\om},\
	\text{satisfying (**)}
	\Big\}.
\end{align*}
Condition (**) requires that the set
$
\bigl\{
i\ \vert\ a_i\neq0,\
(\mathcal{I}_\om+\mathcal{I}_\theta)(g_i)>c
\bigr\}
$
be finite for all $ c\in\R$.

The Floer--Novikov homology depends only on the flux among isotopies with fixed endpoints. Namely, if $\psi_t$ and $\psi_t^\prime$ are based at the identity and have the same endpoint and flux, then
$
\hfn(\psi_t)=\hfn(\psi_t^\prime).
$
The following theorem relates the rank of this homology with that of the Morse--Novikov homology of the flux form.

\begin{thm}[{\cite[Theorem 3.1]{van2020floer}}]\label{thm:hfnandhn}
	Let $\psi_t\in\Symp_0(M,\om)$ be a symplectic isotopy such that $\psi_0=id$ and $\psi_1$ is non-degenerate. Suppose $\mathrm{Flux}(\psi_t)=[\theta]$. Then
	$$
	\mathrm{rank}_{\Lambda_{\om,\theta}}\hfn(\psi_t)
	=
	\dim_{\Lambda_\theta}\hn(\theta).
	$$
\end{thm}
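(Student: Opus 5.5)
The statement is Theorem \ref{thm:hfnandhn}, which the paper quotes from \cite{van2020floer}. The plan is the standard continuation argument: deform the Floer--Novikov complex of $\psi_t$ to a Floer--Novikov complex built from a small multiple of the flux form, then identify the latter with a Morse--Novikov complex after extending scalars. Write $\omega(\cdot,X_t)=\theta+dH_t$, as arranged in the previous subsection.

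\textbf{Step 1: independence of the Hamiltonian part.} Consider any other isotopy with the same flux, generated by $\theta+dK_t$, and with non-degenerate endpoint. Continuation maps in Floer--Novikov theory are well defined for homotopies $\theta+dH^s_t$ that keep the cohomology class fixed. The reason is that the $\theta$-part of the action is controlled on the cover $\tilde{\mathcal{L}}\overbar{M}$, so energy estimates hold up to the Novikov filtration by $\mathcal{I}_\om+\mathcal{I}_\theta$. These maps are $\Lambda_{\om,\theta}$-linear and give isomorphisms $\hfn(\psi_t)\cong\hfn(\psi'_t)$. The endpoint therefore does not matter; only the flux class does.

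\textbf{Step 2: rescaling the flux.} Replace $\theta$ by $\epsilon\theta$ with $\epsilon>0$ small. For $\epsilon\neq0$ the groups $\ker I_{\epsilon\theta}$ and $\ker\mathcal{I}_{\epsilon\theta}$ are unchanged, so the coverings do not change. The Novikov rings change only through the filtration, via $\mathcal{I}_\om+\epsilon\mathcal{I}_\theta$.

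There is a subtlety: $\psi_t$ and the isotopy generated by $\epsilon\theta$ are not isotopic with fixed flux. This is why the theorem compares ranks and not the groups themselves. I would handle it by passing to a common, sufficiently large Novikov field completion over which both filtrations are compatible, and tensoring both complexes up to it. Rank is preserved under flat extension of this kind, because everything is free and finitely generated.

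Next, take $\theta$ represented by a Morse closed $1$-form with $\epsilon$ small. Let $\theta^\sharp$ be its $\omega$-dual vector field, so the isotopy generated is the time-$\epsilon$ flow of $\theta^\sharp$. The $1$-periodic orbits of this flow are exactly the zeros of $\theta$, and they are non-degenerate. A Salamon--Zehnder type argument then shows that, for $J$ generic and $\epsilon$ small, all Floer trajectories with bounded energy are $t$-independent. They therefore reduce to gradient lines of $\theta$ on $\overbar M$ lifted to $\tilde{\mathcal{L}}\overbar M$. Hence the Floer--Novikov complex equals $CN_*(\theta)\otimes_{\Lambda_\theta}\Lambda_{\om,\theta}$, up to completion and a degree shift by $n$.

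\textbf{Step 3: conclusion.} Since $\Lambda_{\om,\theta}$ is flat over $\Lambda_\theta$ (after completion to fields), the rank of the Floer--Novikov homology is $\dim_{\Lambda_\theta}\hn(\theta)$.

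\textbf{Main obstacle.} I expect the hardest step to be the small-$\epsilon$ reduction in Step 2. Compactness is delicate because action is not bounded below in the closed non-exact case, so the Salamon--Zehnder argument must be carried out filtration-wise. The Novikov ring comparison is a second difficulty. I would first try to prove the claim that, for each energy window, $\epsilon$ can be chosen uniformly. Then I would pass to the limit using the finiteness condition (**).
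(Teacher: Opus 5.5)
The paper gives no proof of this statement; it is quoted from \cite[Theorem 3.1]{van2020floer}, so your sketch has to stand on its own. Steps 1 and 3 are fine as stated. With $[\theta]$ fixed, the $s$-derivative of the action along $\theta+dH^s_t$ is $\int_0^1\partial_sH^s_t\,dt$, which is bounded. And $\Lambda_\theta$ is a field, so extension of scalars preserves dimensions.

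The genuine gap is the passage in Step 2 from flux $[\theta]$ to flux $\epsilon[\theta]$. Step 1 compares isotopies only within one cohomology class. Nothing in your argument produces a map between $\hfn(\psi_t)$ and the complex of the isotopy generated by $\epsilon\theta$. Tensoring both complexes into a common field puts them over the same ring but does not relate them, and two unrelated complexes over one field can have different ranks. Restricting the theorem to ranks does not remove the need for such a comparison.

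The comparison would have to come from continuation or bifurcation along $\lambda(s)\theta+dH^s_t$, with $\lambda$ moving from $1$ to $\epsilon$. This is exactly where the method of Step 1 breaks down. Besides boundary terms fixed by the ends and bounded Hamiltonian terms, the energy of a solution $u$ now contains, up to sign, the term $\int_{\R}\lambda'(s)\int_0^1 f_\theta(\bar u(s,t))\,dt\,ds$. Here $\bar u$ is the lift to $\overbar M$ and $\pi^*\theta=df_\theta$. This term is not determined by the deck class of $u$, and $f_\theta$ is unbounded on $\overbar M$, so there is no a priori energy bound. Compactness is therefore unavailable even in a fixed deck class, and no chain map with coefficients in either Novikov ring is defined. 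The mismatch between $\Lambda_{\om,\theta}$ and $\Lambda_{\om,\epsilon\theta}$ is only a symptom: when $\om|_{\pi_2(M)}=0$ the two rings coincide, yet the problem remains. At best, your argument covers fluxes represented by a sufficiently small Morse form.

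Even there, your plan for the small-$\epsilon$ step does not close. The complex for one fixed $\epsilon$ involves all deck classes simultaneously. Choosing $\epsilon$ window by window and then passing to a limit therefore does not yield a single chain-level identification. Moreover, if $\om|_{\pi_2(M)}\neq0$, then for every $\epsilon$ there are deck classes with nonzero $\om$-part whose period under $\mathcal{I}_\om+\epsilon\mathcal{I}_\theta$ is small. For these classes, small energy does not bound the $\theta$-displacement, which is of order $1/\epsilon$, so the bubbling-limit argument is not uniform.

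An argument that needs no energy bound uses the $S^1$-symmetry of the autonomous problem. With $\theta$ and $J$ independent of $t$, rotation in $t$ acts on the moduli spaces and commutes with $\R$-translation. It therefore acts on the discrete set of index-one trajectories modulo $\R$, where it must be trivial. Since a continuous homomorphism $S^1\to\R$ is trivial, every such trajectory is $t$-independent. It is thus a lifted gradient line of $\theta$ with zero $\om$-shift. This presupposes transversality for $t$-independent data, which is the real technical input (Salamon--Zehnder, Hofer--Salamon).
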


\section{Mean index and Conley--Zehnder index}
We briefly recall the definitions and relevant properties of the mean index and the Conley--Zehnder index, following \cite{salamon1992morse}.

Let $\mathrm{Sp}(2n)$ denote the group of $2n\times2n$ symplectic matrices. For $A\in\mathrm{Sp}(2n)$, denote its eigenvalues of the first kind\footnote{That is, $\vert\lambda_j\vert<1$, or $\vert\lambda_j\vert=1$ and $\lambda_j$ has positive Krein signature.} by
$
\lambda_1,\ldots,\lambda_n.
$
The characteristic exponents of $A$ are real numbers $\theta_j$ satisfying
$
e^{i\pi\theta_j}
=
{\lambda_j}/{\vert\lambda_j\vert}.
$
It is possible to choose their determinations in $[0,2]$ and order them increasingly. If $\det(I-A)>0$, an even number $2k$ of the characteristic exponents are even integers, and we set
\[
\theta_j=0,\quad j=1,\ldots,k,
\qquad\text{and}\qquad
\theta_j=2,\quad j=n-k+1,\ldots,n.
\]
If $\det(I-A)<0$, an odd number $2k+1$ of them are even integers, and we set
\[
\theta_j=0,\quad j=1,\ldots,k+1,
\qquad\text{and}\qquad
\theta_j=2,\quad j=n-k+1,\ldots,n.
\]
These conventions uniquely determine the characteristic exponents and yield continuous functions $\theta_j:\mathrm{Sp}(2n)^*=\{ A \in \mathrm{Sp}(2n);\det(A-I) \neq 0\} \rightarrow[0,2]$, $A\mapsto \theta_j(A)$.

Let $\Psi\colon[0,1]\to\mathrm{Sp}(2n)$ be a smooth path. Recall that $\pi_1(\mathrm{Sp}(2n))=\Z$ and that there is a continuous map
$
\rho\colon\mathrm{Sp}(2n)\to S^1
$
inducing this isomorphism and restricting to the complex determinant on
$
\mathrm{U}(n)\cong\mathrm{Sp}(2n)\cap\mathrm{O}(2n).
$
Choose a lift $\alpha\colon[0,1]\to\R$ such that
$
\rho(\Psi(t))=e^{i\alpha(t)}.
$
The mean index of $\Psi$ is
\[
\Delta(\Psi)
=
\frac{\alpha(1)-\alpha(0)}{\pi}.
\]

We will also use the following description. The eigenvalues of the first kind of $\Psi(t)$ can be represented by continuous functions
$
\lambda_j\colon[0,1]\to\C,\, j=1,\ldots,n.
$
Let $\theta_j\colon[0,1]\to\R$ be corresponding characteristic exponents satisfying
$
e^{i\pi\theta_j(t)}
=
{\lambda_j(t)}/{\vert\lambda_j(t)\vert}.
$ Then
\begin{equation}\label{eq:mean index}
\Delta(\Psi)=\sum_{j=1}^n(\theta_j(1)-\theta_j(0)).
\end{equation}
Indeed,
\[
e^{i\alpha(t)}
=
\rho(\Psi(t))
=
\prod_{j=1}^n
\frac{\lambda_j(t)}{\vert\lambda_j(t)\vert}
=
e^{i\pi\sum_{j=1}^n\theta_j(t)},
\]
which gives the claimed formula. In this case, we denote $\theta_j(\Psi):=\theta_j(1)$.

Furthermore, if  $\Psi\colon[0,1]\to\mathrm{Sp}(2n)^*$, then
\begin{equation}\label{eq:mean index2}
\Delta(\Psi)=\sum_{j=1}^n(\theta_j(\Psi(1))-\theta_j(\Psi(0))).
\end{equation}
Indeed, as a continuous function on $\mathrm{Sp}(2n)^*$, $\sum\theta_j$ is a lift of $\rho$. 

Now let $x\in\mathcal{P}(\theta_t)$ be a contractible periodic orbit and let $u\colon D^2\to M$ be a capping of $x$, so that $u|_{\partial D^2}=x$. The capping induces a symplectic trivialization of $x^*TM$, under which the linearized isotopy
$
(d\psi_t)_{x(0)}
\colon
T_{x(0)}M\to T_{\psi_t(x(0))}M
$
becomes a smooth path
$
\Psi_x\colon[0,1]\to\mathrm{Sp}(2n)
$
starting at the identity. We define
$
\Delta(\psi_t,x,u):=\Delta(\Psi_x).
$ When $x$ is non-contractible, we fix instead a symplectic trivialization $\tau$ of $x^*TM$ and define in the same way $\Delta(\psi_t,x,\tau) = \Delta(\Psi_x)$ .

If $x$ is non-degenerate, then $\Psi_x(1) \in \mathrm{Sp}(2n)^*$. It can therefore be connected through non-degenerate symplectic matrices to $W_+=-I$ or $ W_-=\mathrm{diag}(2,-1,\ldots,-1,1/2,-1,\ldots,-1),$
depending on whether $\det(I-\Psi_x(1))$ is positive or negative. Concatenating $\Psi_x$ with such a path gives a path $\Psi_x^\prime$, and we define $\cz(\psi_t,x,u):=\Delta(\Psi_x^\prime).$
This definition is independent of the choice of connecting path.

We next express the Conley--Zehnder index in terms of the characteristic exponents. Define $[[\cdot]]\colon\R\to\Z$ by setting $[[r]]=r$ when $r$ is an integer and taking $[[r]]$ to be the nearest odd integer to $r$ otherwise. Moreover, we have
\begin{equation}\label{eq:theta}
\theta_1(W_+)=\ldots=\theta_n(W_+)=1,
\qquad
\theta_1(W_-)=0,\quad
\theta_2(W_-)=\ldots=\theta_n(W_-)=1.
\end{equation}

\begin{lem}\label{lemma:cz}
Let $x\in\mathcal{P}(\theta_t)$ be a non-degenerate contractible periodic orbit and let $u\colon D^2\to M$ be a capping of $x$. Then
\begin{equation}\label{eq:CZindex}
\cz(\psi_t,x,u)
=
\sum_{j=1}^n[[\theta_j(\Psi_x)]].
\end{equation}
\end{lem}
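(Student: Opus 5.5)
The plan is to compute $\cz(\psi_t,x,u)=\Delta(\Psi_x')$ through the characteristic exponents, using \eqref{eq:mean index2} on the appended path. Write $\Psi_x'=\Psi_x\#\gamma$, where $\gamma\colon[0,1]\to\mathrm{Sp}(2n)^*$ joins $\Psi_x(1)$ to $W_\pm$ within non-degenerate matrices. The mean index is additive under concatenation, since it is defined by a lift of $\rho$. Hence
\[
\cz(\psi_t,x,u)=\Delta(\Psi_x)+\Delta(\gamma).
\]
By \eqref{eq:mean index}, $\Delta(\Psi_x)=\sum_j(\theta_j(1)-\theta_j(0))$, where $\theta_j(t)$ are continuous exponents along $\Psi_x$. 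Because $\Psi_x(0)=I$, all eigenvalues equal $1$, and we may normalize $\theta_j(0)=0$; any other choice differs by even integers that cancel in the sum formula. I would check carefully that the endpoint values $\theta_j(1)$ then match the canonical determinations $\theta_j(\Psi_x(1))\in[0,2]$ up to an integer shift. The safest route is to identify the reals $\theta_j(\Psi_x)$ appearing in \eqref{eq:CZindex} with these continuous endpoint values $\theta_j(1)$, as the paper's notation $\theta_j(\Psi):=\theta_j(1)$ indicates. For $\gamma$, \eqref{eq:mean index2} gives $\Delta(\gamma)=\sum_j\bigl(\theta_j(W_\pm)-\theta_j(\Psi_x(1))\bigr)$, with the canonical exponents on $\mathrm{Sp}(2n)^*$.

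Next I would compare the continuous endpoint values $\theta_j(1)$ with the canonical determinations in $[0,2]$. They differ by even integers $2m_j$: each lift $\theta_j(1)$ satisfies $e^{i\pi\theta_j(1)}=\lambda_j/|\lambda_j|$, and the canonical representative picks the value in $[0,2]$ with the stated convention for exponents equal to $0$ or $2$. This gives
\[
\cz=\sum_j\bigl(2m_j+\theta_j(W_\pm)\bigr).
\]
It then remains to show, for each $j$, that $2m_j+\theta_j(W_\pm)=[[\theta_j(1)]]$, possibly after re-pairing indices.

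The pairing is done by cases on the canonical exponent $\vartheta_j\in[0,2]$ of $\Psi_x(1)$. If $\vartheta_j\in(0,2)$ is non-integer, then $[[\cdot]]$ rounds it to the odd integer $1$, so $[[2m_j+\vartheta_j]]=2m_j+1$. If $\vartheta_j=1$, it is an integer and $[[\cdot]]$ fixes it. If $\vartheta_j\in\{0,2\}$, the value is fixed by $[[\cdot]]$, giving $2m_j$ or $2m_j+2$. Summing over $j$ gives $\sum_j[[\theta_j(1)]]=2\sum_j m_j+\#\{\vartheta_j\ \text{odd-rounded}\}+2\#\{\vartheta_j=2\}$.

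On the other side, $\sum_j\theta_j(W_\pm)$ equals $n$ or $n-1$ by \eqref{eq:theta}. The key check is the parity bookkeeping. When $\det(I-A)>0$, the $2k$ even-integer exponents are split as $k$ zeros and $k$ twos, so their total contribution $2k$ equals that of $2k$ ones. When $\det(I-A)<0$, there are $k+1$ zeros and $k$ twos, contributing $2k$, which equals $2k+1$ ones minus one; this matches the single exponent $0$ of $W_-$. These two identities close the count, giving \eqref{eq:CZindex}.

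The main obstacle is this last bookkeeping step: correctly handling exponents that sit at even integers, and checking that the convention on $\mathrm{Sp}(2n)^*$ produces exactly the same lifts as the continuous path. I would first treat the generic case with no eigenvalue of modulus one or negative real, and then argue by continuity of both sides on $\mathrm{Sp}(2n)^*$. Both sides are locally constant integers on the connected components determined by the sign of $\det(I-A)$ together with the homotopy class of the path.
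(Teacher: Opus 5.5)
Your proposal is correct and follows essentially the paper's proof: concatenate with a path in $\mathrm{Sp}(2n)^*$ to $W_\pm$, split $\Delta$ additively using \eqref{eq:mean index} and \eqref{eq:mean index2}, observe that the continuous endpoint exponents differ from the canonical ones by even integers, and finish by counting the exponents equal to $0$ and $2$ (the paper phrases this count as $1-\vartheta_j=[[\vartheta_j]]-\vartheta_j$ together with the cancellation of the zeros against the twos). Your closing genericity-plus-continuity fallback is unnecessary, since your case analysis already handles exponents at $0$ and $2$ directly.
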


\begin{proof}
Let $\Phi$ be a path of non-degenerate symplectic matrices connecting $\Psi_x(1)$ to $W_+$ or $W_-$, according to the sign of $\det(I-\Psi_x(1))$. By additivity under concatenation,
\begin{equation}\label{eq:cz2}
\cz(\psi_t,x,u)
=
\Delta(\Psi_x^\prime)
=
\Delta(\Psi_x*\Phi)
=
\Delta(\Psi_x)+\Delta(\Phi).
\end{equation}

Suppose that $\det(I-\Psi_x(1))>0$, so that $\Phi$ connects $\Psi_x(1)$ to $W_+$. By \eqref{eq:mean index2} and \eqref{eq:theta},
\begin{equation}\label{eq:deltaphi}
\Delta(\Phi)
=
\sum_{j=1}^n
\bigl(\theta_j(W_+)-\theta_j(\Psi_x(1))\bigr)
=
\sum_{j=1}^n
\bigl(1-\theta_j(\Psi_x(1))\bigr).
\end{equation}
By our choice of determinations, the numbers of indices for which $\theta_j(\Psi_x(1))=0$ and $\theta_j(\Psi_x(1))=2$ coincide, so their contributions cancel. For $\theta_j(\Psi_x(1))\in(0,2)$,
\[
1-\theta_j(\Psi_x(1))
=
[[\theta_j(\Psi_x(1))]]-\theta_j(\Psi_x(1)).
\]
Combining this identity with \eqref{eq:cz2}, \eqref{eq:deltaphi}, and \eqref{eq:mean index} yields:
\begin{align*}
\cz(\psi_t,x,u)=\sum_{j=1}^n([[\theta_j(\Psi_x(1))]] + (\theta_j(\Psi_x)-\theta_j(\Psi_x(1))))= \sum_{j=1}^n[[\theta_j(\Psi_x)]].
\end{align*}
The last equality follows from the $2$-periodicity of the function $x\mapsto x-[[x]]$.

If $\det(I-\Psi_x(1))<0$, then $\Phi$ connects $\Psi_x(1)$ to $W_-$, and
\[
\Delta(\Phi)
=
\sum_{j=1}^n
\bigl(\theta_j(W_-)-\theta_j(\Psi_x(1))\bigr)
=
\sum_{j=2}^n
\bigl(1-\theta_j(\Psi_x(1))\bigr),
\]
since $\theta_1(W_-)=\theta_1(\Psi_x(1))=0$. The same argument applies since the number of indices $j\geq2$ for which $\theta_j(\Psi_x(1))=0$ equals the number of indices for which $\theta_j(\Psi_x(1))=2$.
\end{proof}

\subsection{Properties} We recall some useful properties of the Conley--Zehnder index and the mean index. Let $x \in \mathcal{P}(\theta_t)$ be a periodic orbit and $u\colon D^2 \to M$ a capping of $x$. The $k$-th iteration of $x\colon \R/\Z \to M$ is the orbit $x^k \colon [0,k] \to M,\ x^k(t)=x(t)$. We denote by $u^k$ the natural capping of $x^k$ obtained from $u$. 
\begin{enumerate}
	\item({Capping change}) Let $u$ and $v$ be two cappings of a periodic orbit $x$. We have $$\Delta(\psi_t,x,u) = \Delta(\psi_t,x,v) + 2\langle c_1(TM), v \# (-u) \rangle,$$ and if $x$ is non-degenerate, we have $$\cz(\psi_t,x,u) = \cz(\psi_t,x,v) + 2\langle c_1(TM), v \# (-u) \rangle,$$ where $c_1(TM) \in \h^2(M;\Z)$ denotes the usual Chern class. \label{cappingdep} \\
	In particular, when $(M,\omega)$ is symplectically Calabi--Yau, i.e., $c_1(TM)|_{\pi_2(M)} = 0$, both indices are well-defined and independent of the capping \(u\).
\item (Grading) Assume that $(M,\om)$ is symplectically Calabi--Yau and that all orbits in $\mathcal{P}(\theta_t)$ are non-degenerate. Then the Floer--Novikov complex admits an absolute $\Z$-grading given by the Conley--Zehnder index. In particular, $\hfn_k(\psi_t)$ is generated over $\Lambda_{\om,\theta}$ by classes $[x,u]$ with $\cz(\psi_t,x,u)=k,$ and the differential decreases the grading by one. \label{grading}
	\item (Iteration) $\Delta(\psi_t^k,x^k,u^k) = k \Delta(\psi_t,x,u)$. \label{iteformula}
	\item (Indices Relation) For a non-degenerate periodic orbit $x \in \mathcal{P}(\theta_t)$ and a capping $u$ of $x$, we have \label{reldeltacz}
	\begin{equation*}
		\vert \cz(\psi_t,x,u) - \Delta(\psi_t,x,u) \vert < n.
	\end{equation*}
	\item (Change of isotopy) Let $\psi_t$ and $\phi_t$ be two symplectic isotopies based at the identity such that $\psi_1=\phi_1=\psi,$ and let $x \in  \fix(\psi)$. Consider $\tau_\psi$ and $\tau_\phi$ trivializations of $(\psi_t(x))^*TM$ and $(\phi_t(x))^*TM$, respectively. The concatenation $\Gamma_x := \Psi_{x,\tau_\psi}*\overline{\Phi_{x,\tau_\phi}}$, where the bar denotes the reversed path, is a loop in $\mathrm{Sp}(2n)$ and we have
$$\Delta(\psi_t,x,\tau_\psi) - \Delta(\phi_t,x,\tau_\phi) = \Delta(\Gamma_x) = 2\mu(\Gamma_x),$$
where $\mu(\Gamma_x)\in\mathbb Z$ denotes the Maslov index of the loop $\Gamma_x$.

In particular,
$$\Delta(\psi_t,x,\tau_\psi) \equiv \Delta(\phi_t,x,\tau_\phi) \pmod{2}$$
and thus the mean index modulo $2$ is independent of the choice of the symplectic isotopy and of the trivialization. \label{changeisotopy}
\end{enumerate}

\begin{remark}\label{rem:rank}
Using the grading in \eqref{grading}, the proof of Theorem \ref{thm:hfnandhn} yields the equality
$$\mathrm{rank}_{\Lambda_{\om,\theta}} \hfn_k(\psi_t) = \dim_{\Lambda_\theta} \hn_{k+n}(\theta),$$
where $\dim M = 2n$. 
\end{remark}


\subsection{Even integer mean index} We state the estimate used to obtain our main results.

\begin{thm}\label{thm:zeromean}
Suppose that $x_0 \in \fix_0(\psi)$ is a fixed point such that $\Delta(\psi_t,x_0)$ is an even integer. Then 
$$|\cz(\psi_t, x_0)-\Delta(\psi_t,x_0)| \leq n-2.$$ In particular, $\cz(\psi_t, x_0) = \Delta(\psi_t,x_0)$ when $n=2$.
\end{thm}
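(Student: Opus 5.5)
Write $\Psi=\Psi_{x_0}$ and $\theta_j:=\theta_j(\Psi)\in\R$ for the terminal values of the continuous characteristic exponents along $\Psi$ with $\theta_j(0)=0$; note $\Psi(0)=I$ is degenerate, but the exponents of the identity are all $0$ or $2$, so we may start them at $0$. By \eqref{eq:mean index} and Lemma \ref{lemma:cz},
\[
\Delta(\psi_t,x_0)=\sum_{j=1}^n\theta_j,\qquad \cz(\psi_t,x_0)=\sum_{j=1}^n[[\theta_j]],
\]
so the difference is $\sum_j\delta_j$ with $\delta_j:=[[\theta_j]]-\theta_j$. The plan is to bound this sum termwise and then use the parity hypothesis to improve the naive bound $n$ by $2$.

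The termwise facts follow from the definition of $[[\cdot]]$. If $\theta_j\notin\Z$, then $|\delta_j|<1$. If $\theta_j$ is an integer, then $\delta_j=0$, and $\theta_j$ must be even, since odd integer exponents correspond to eigenvalue $-1$, which is allowed. Non-degeneracy forbids only eigenvalue $1$, i.e. even $\theta_j$; but the conventions place even exponents at $0$ or $2$ after normalization, and along the continuous lift they appear as even integers. In any case integer $\theta_j$ gives $\delta_j=0$. Thus
\[
|\cz-\Delta|\le \sum_{j\notin\Z}|\delta_j| < \#\{j:\theta_j\notin\Z\}=:m\le n.
\]

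Next use the parity. Since $\cz\in\Z$ and $\Delta\in 2\Z$, the difference $\cz-\Delta=\sum\delta_j$ is an integer. Each $[[\theta_j]]$ with $\theta_j\notin\Z$ is odd, and each integer $\theta_j$ is even: the continuous exponent reaching an odd integer would mean eigenvalue $-1$, which is fine, but then $[[\theta_j]]=\theta_j$ is odd. So I must handle odd integers too. Split the indices into three classes: non-integers ($m$ of them, with odd $[[\cdot]]$), odd integers ($k$ of them), and even integers. Then
\[
\cz\equiv m+k \pmod 2 .
\]
Because $\Delta$ is even and $\cz-\Delta$ is an integer with $|\cz-\Delta|<m$, we get $|\cz-\Delta|\le m-1$. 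If $m+k<n$, we are done, since then $m\le n-1$ and $|\cz-\Delta|\le n-2$. The remaining case is $m=n$, all exponents non-integral (the case $k>0$ already gives $m\le n-1$). There $\cz\equiv n\pmod 2$, so $\cz-\Delta\equiv n\pmod 2$, while $|\cz-\Delta|<n$. Any integer of the same parity as $n$ that is strictly less than $n$ in absolute value is at most $n-2$, which gives the bound.

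When $m\le n-1$ but $m+k=n$ with $k\ge1$, the sharper parity is not needed: $|\cz-\Delta|\le m-1\le n-2$. So in every case $|\cz-\Delta|\le n-2$, and for $n=2$ this forces equality.

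The main obstacle is the careful bookkeeping of the continuous exponents versus the normalized determinations $\theta_j(\Psi(1))\in[0,2]$ used in Lemma \ref{lemma:cz}. One must justify that $\theta_j(\Psi)$ is an integer exactly when $\theta_j(\Psi(1))$ is, that the identity $\sum[[\theta_j(\Psi)]]=\cz$ holds with the same labeling, and that strict inequality $|\delta_j|<1$ holds for non-integers. The first thing I would check is the non-degenerate case where some exponent is exactly $0$ or $2$ due to real hyperbolic eigenvalues; there $\delta_j=0$, and this is consistent with the above.
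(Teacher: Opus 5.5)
Your argument is correct, but it takes a different route from the paper's. The paper shifts the exponents by even integers to $\alpha_j\in\theta_j(\Psi_{x_0})+2\Z$ with $\sum_j\alpha_j=0$; this shift is exactly where evenness of $\Delta$ is used. By $2$-periodicity of $x\mapsto x-[[x]]$ it then gets $\cz-\Delta=\sum_j[[\alpha_j]]$, and it concludes with Lemma \ref{lemma:meanzero}. That lemma is proved by induction on $n$ from the estimate $|[[\alpha+\beta]]-[[\alpha]]-[[\beta]]|\le 1$ of Lemma \ref{lem1}.

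You instead bound the defects $\delta_j=[[\theta_j]]-\theta_j$ directly:
\begin{itemize}
\item they vanish for integer exponents and satisfy $|\delta_j|<1$ otherwise;
\item their sum $\cz-\Delta$ is an integer;
\item in the only critical case, where all $n$ exponents are non-integral, every $[[\theta_j]]$ is odd, so $\cz-\Delta\equiv\cz\equiv n \pmod 2$. This improves the strict bound $<n$ to $\le n-2$.
\end{itemize}
This avoids both the shift and the induction. Applied to numbers summing to zero, it is in fact a direct proof of Lemma \ref{lemma:meanzero}. It also yields slightly more: $|\cz-\Delta|\le\max(m-1,0)$ as soon as $\Delta\in\Z$, where $m$ is the number (with multiplicity) of non-real first-kind eigenvalues of $\Psi_{x_0}(1)$. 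So evenness is only needed when $\Psi_{x_0}(1)$ has no real eigenvalue. What the paper's route buys is a self-contained combinatorial statement about $[[\cdot]]$ that makes no reference to eigenvalues.

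When you write it up, fix the following points.
\begin{itemize}
\item Delete the self-contradictory remarks that integer exponents must be even. Both parities occur (positive real eigenvalues $\neq 1$ and negative real eigenvalues), and all you use is $\delta_j=0$ for them.
\item Treat $m=0$ separately. There the strict inequality $<m$ is false as written, but $\cz=\Delta$ and $0\le n-2$ because $n\ge 2$.
\item Drop the count $k$; it is superfluous, since the parity step is only needed when $m=n$.
\end{itemize}
The bookkeeping you flag as the main obstacle is already settled by Lemma \ref{lemma:cz} and \eqref{eq:mean index}, together with $\theta_j(\Psi)\equiv\theta_j(\Psi(1))\pmod 2$.
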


To prove this, we need two lemmata concerning the function $[[\cdot]]$ defined above.

\begin{lem}\label{lem1}
	Let $\alpha$ and $\beta$ be two real numbers. Then 
$$|[[\alpha+\beta]]-[[\alpha]]-[[\beta]]|\leq 1.$$
\end{lem}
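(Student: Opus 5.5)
The idea is to reduce everything to the error function $e(r):=r-[[r]]$. First I will check from the definition of $[[\cdot]]$ that $|e(r)|<1$ for every real $r$, with $e(r)=0$ exactly when $r\in\Z$. For non-integer $r\in(2m,2m+2)$, the nearest odd integer is $2m+1$, so $e(r)=r-2m-1\in(-1,1)$. In the same way, $[[r]]$ is odd whenever $r\notin\Z$.

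Next, since $(\alpha+\beta)-\alpha-\beta=0$, I will rewrite
\[
D:=[[\alpha+\beta]]-[[\alpha]]-[[\beta]]=e(\alpha)+e(\beta)-e(\alpha+\beta).
\]
This quantity $D$ is an integer. A crude bound gives $|D|<3$, hence $|D|\leq 2$, and the only real work is to rule out $|D|=2$. I will do this by splitting into cases according to which of $\alpha$, $\beta$, $\alpha+\beta$ are integers.

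\emph{Case 1: $\alpha,\beta\in\Z$.} Then $D=0$.

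\emph{Case 2: exactly one of $\alpha,\beta$ is an integer, say $\alpha$.} Then $e(\alpha)=0$ and $D=e(\beta)-e(\alpha+\beta)$. This is a difference of two numbers in $(-1,1)$, so $|D|<2$ and hence $|D|\leq1$.

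\emph{Case 3: $\alpha,\beta\notin\Z$ and $\alpha+\beta\in\Z$.} Then $e(\alpha+\beta)=0$ and $|D|=|e(\alpha)+e(\beta)|<2$, so again $|D|\leq1$.

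\emph{Case 4: $\alpha,\beta,\alpha+\beta\notin\Z$.} Here I will use parity instead of size. The integers $[[\alpha]]$, $[[\beta]]$ and $[[\alpha+\beta]]$ are all odd, so $D$ is odd. Combined with $|D|\leq 2$, this forces $|D|=1$.

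The only step needing care is the last case, where the naive bound allows $|D|=2$ and the parity observation is what excludes it. Everything else follows directly from the basic properties of $e$.
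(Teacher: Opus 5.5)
Your proposal is correct and follows the paper's proof essentially verbatim: the error function $r-[[r]]$ of modulus less than $1$ (vanishing exactly on integers) gives $|D|\leq 2$, any integer among $\alpha,\beta,\alpha+\beta$ kills one error term and forces $|D|\leq 1$, and otherwise parity makes $D$ odd. Your four cases are just a finer split of the paper's single case ``at least one of $\alpha,\beta,\alpha+\beta$ is an integer.''
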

\begin{proof}
	Define $\delta_x:=x-[[x]]$, which satisfies $|\delta_x|\in[0,1)$ for all $x\in\R$ and $\delta_x=0$ if and only if $x\in\Z$. Observe that 
	\begin{equation*}
		\Delta:=|[[\alpha+\beta]]-[[\alpha]]-[[\beta]]|=|\delta_{\alpha+\beta}-\delta_\alpha-\delta_\beta|\leq2,
	\end{equation*}
	since $\Delta$ is an integer and smaller than $3$ by the triangle inequality. It is therefore enough to prove that $\Delta\neq2$. If at least one of the values $\alpha$, $\beta$, and $\alpha +\beta$ is an integer, then $\Delta<2$ since one of the corresponding $\delta$-values must vanish. Otherwise, $[[\alpha+\beta]]$, $[[\alpha]]$, and $[[\beta]]$ are all odd integers. Therefore, $\Delta$ is odd, and, hence, $\Delta=1$.
\end{proof}

\begin{lem}\label{lemma:meanzero}
	Let $\alpha_i$, $i=1,\ldots,n$ ($n\geq 2$), be real numbers that add up to zero. Then $$\sum_{i=1}^{n}[[\alpha_i]]\in[-n+2,n-2].$$
\end{lem}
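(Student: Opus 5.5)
We have to show: if real numbers $\alpha_1,\dots,\alpha_n$ with $n\geq2$ satisfy $\sum\alpha_i=0$, then $\bigl|\sum_i[[\alpha_i]]\bigr|\leq n-2$.

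The plan is to write $S=\sum_{i=1}^n[[\alpha_i]]$ and use $\delta_x=x-[[x]]$ as in the proof of Lemma \ref{lem1}. Since $\sum\alpha_i=0$, we get $S=-\sum_i\delta_{\alpha_i}$. Each $|\delta_{\alpha_i}|<1$, so $|S|<n$, and as $S$ is an integer, $|S|\leq n-1$. It then remains to exclude $|S|=n-1$.

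Suppose $|S|=n-1$; by symmetry under $\alpha\mapsto-\alpha$ (note $[[-x]]=-[[x]]$), assume $S=n-1$. We split into two cases according to whether some $\alpha_i$ is an integer.

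\emph{Some $\alpha_{i_0}$ is an integer.} Then $\delta_{\alpha_{i_0}}=0$, and the remaining $n-1$ terms give $|S|<n-1$, a contradiction.

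\emph{No $\alpha_i$ is an integer.} Then every $[[\alpha_i]]$ is odd, so $S\equiv n\pmod 2$. But $n-1\not\equiv n\pmod 2$, a contradiction.

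Hence $|S|\leq n-2$, which is exactly the claim. There is no real obstacle: the argument is the same parity trick as in Lemma \ref{lem1}. The only care needed is the strictness $|\delta_x|<1$ and the fact that $\delta_x=0$ exactly for integer $x$, both already recorded in the proof of Lemma \ref{lem1}. One can alternatively induct on $n$ using Lemma \ref{lem1}, but the direct argument is cleaner.
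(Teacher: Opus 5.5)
Your proof is correct, but it takes a different route from the paper's.

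The paper argues by induction on $n$. It uses oddness of $[[\cdot]]$ to substitute $\alpha_n=-\sum_{i<n}\alpha_i$, and then splits the sum by the triangle inequality into two pieces:
\begin{itemize}
\item a zero-sum expression in the $n-1$ numbers $\alpha_1,\ldots,\alpha_{n-2},-\sum_{i\le n-2}\alpha_i$, bounded by $n-3$ by the induction hypothesis;
\item a defect of the form $|[[a]]+[[b]]-[[a+b]]|$, bounded by $1$ via Lemma \ref{lem1}.
\end{itemize}

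You instead run the argument from the proof of Lemma \ref{lem1} directly on all $n$ terms. The identity $S=-\sum_i\delta_{\alpha_i}$ together with $|\delta_x|<1$ gives $|S|\le n-1$. The integer/non-integer dichotomy then rules out $|S|=n-1$: either some $\delta_{\alpha_i}$ vanishes, or every $[[\alpha_i]]$ is odd and $S\equiv n \pmod 2$.

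Your version is self-contained and needs no induction. It also makes clear where the two units of slack come from: one from the strict bound $|\delta_x|<1$, one from parity. In fact Lemma \ref{lem1} is exactly the case $n=3$ of your argument applied to $\alpha,\beta,-(\alpha+\beta)$, so your approach would let one drop Lemma \ref{lem1} altogether. The paper's induction instead uses Lemma \ref{lem1} as a black box and reduces the general case to it. That is modular, but slightly longer.
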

\begin{proof}
	When $n=2$, the statement follows from the fact that $[[\cdot]]$ is an odd function. We prove the statement by induction on $n$.
	\begin{eqnarray*}
		\left|\sum_{i=1}^{n}[[\alpha_i]]\right|&=& \left|\sum_{i=1}^{n-1}[[\alpha_i]] - \left[\left[(\sum_{i=1}^{n-1}\alpha_i)\right]\right]\right| \\
		&\leq&  \left| \sum_{i=1}^{n-2}[[\alpha_i]] + [[\alpha_{n-1}]] - \left(\left[\left[\sum_{i=1}^{n-2}\alpha_i\right]\right] +[[\alpha_{n-1}]]\right)\right|\\ &+&  \left| \left(\left[\left[\sum_{i=1}^{n-2}\alpha_i\right]\right]+[[\alpha_{n-1}]]\right) -\left[\left[(\sum_{i=1}^{n-1}\alpha_i)\right]\right]\right| \\
		&=&  \left| \sum_{i=1}^{n-2}[[\alpha_i]] + \left(\left[\left[-\sum_{i=1}^{n-2}\alpha_i\right]\right]\right)\right|\\ &+&  \left| \left(\left[\left[\sum_{i=1}^{n-2}\alpha_i\right]\right]+[[\alpha_{n-1}]]\right) -\left[\left[(\sum_{i=1}^{n-1}\alpha_i)\right]\right]\right| \\
		&\leq& (n-3)+1=n-2. 
	\end{eqnarray*}
	The last inequality follows firstly by the induction step and secondly by Lemma~\ref{lem1}.
\end{proof}

Finally, we prove Theorem \ref{thm:zeromean}.

\begin{proof}[Proof of Theorem \ref{thm:zeromean}]

By assumption the mean index of $x_0$ is an even integer and  by  identity \eqref{eq:mean index}, we have
$
\Delta(\psi_t,x_0) = \Delta(\Psi_{x_0})= \sum_{j=1}^n \theta_j(\Psi_{x_0})=2m,$ for some $m \in \Z$.
Let  $\Phi\colon[0,1]\to\mathrm{Sp}(2n)$ be a loop such that 
\begin{equation}\label{eq:vanishing}
 0= \Delta(\Phi*\Psi_{x_0})=\sum_{j=1}^{n}\theta_j(\Phi*\Psi_{x_0})=\sum_{j=1}^{n}(\theta_j(\Psi_{x_0}) + 2m_j).
 \end{equation}
It follows that $\sum_{j=1}^n 2m_j = -2m$. By Lemma \ref{lemma:cz},
\begin{align*}
\cz(\psi_t,x_0) = \sum_{j=1}^n [[\theta_j(\Psi_{x_0})]]&= \sum_{j=1}^n ([[\theta_j(\Psi_{x_0})+2m_j]] -2m_j)\\
	&=\sum_{j=1}^n [[\theta_j(\Psi_{x_0})+2m_j]] +2m,
\end{align*}
where in the second equality we use the $2$-periodicity of the function $x\mapsto x - [[x]]$.
Finally, by \eqref{eq:vanishing} and Lemma \ref{lemma:meanzero},
$$\sum_{j=1}^n [[\theta_j(\Psi_{x_0})+2m_j]]\in[-(n-2),n-2]$$
and this finishes the proof.
\end{proof}


\section{Proofs}\label{sec:proof}
\subsection{Proof of Theorem \ref{thm:main}}
There are only finitely many fixed points, since $\psi$ is non-degenerate. Let
\begin{equation}\label{eq:delta}
0<\delta<\min \{|\Delta(\psi_t,x)| ;x\in\fix_0(\psi_t), \Delta(\psi_t,x)\neq0\}
\end{equation}
and let  $p_0$ be a prime number such that $\psi^p$ is non-degenerate for all primes $p\geq p_0$ and $p_0\delta>2n$. Suppose that $\fix_0(\psi_t^p)=\fix_0(\psi_t)$ for a prime number $p\geq p_0$.

By assumption \eqref{eq:main} and Remark \ref{rem:rank} there exists a non-trivial class $\eta_p\in\hfn(\psi^p_t)$ of degree $k\in\{-n,-n+1,n-1,n\}$. It follows from the indices relation in \eqref{reldeltacz} that any $y^{(p)}\in\fix_0(\psi^p_t)$, such that a capping of $\{\psi^p_t(y^{(p)})\}$ appears as a summand in a representative of $\eta_{p}$, has mean index $\Delta(\psi^p_t,y^{(p)})\in(-2n,2n)$. Moreover,  $\Delta(\psi^p_t,y^{(p)})\not=0$, since otherwise $\Delta(\psi^p_t,y^{(p)})=0$ and, by Theorem~\ref{thm:zeromean}, $|\cz(\psi^p_t,y^{(p)})|\leq n-2$, which contradicts $k\in\{-n,-n+1,n-1,n\}$.   Hence $\Delta(\psi^p_t,y^{(p)})\in(-2n,2n)\setminus \{0\}$. However, $$|\Delta(\psi^p_t,y^{(p)})|=p|\Delta(\psi_t,y)|>p\delta>2n,$$ which is a contradiction. \qedsymbol


\subsection{Proof of Theorem \ref{thm:t4}} By composing a given isotopy $\psi_t$ based at the identity and such that $\psi_1 = \psi$ with a loop of symplectomorphisms if necessary, we may assume that $x_0$ is contractible without changing the parity of the mean index; see property \eqref{changeisotopy}. The Hamiltonian case follows from Remark \ref{rmk:Hamiltoniancase}. Therefore, we may assume $\flux(\psi_t) \neq 0$. In this case, we have $\hfn (\psi^k_t) = 0$, for all $k \in \mathbb{N}$. {Recall that the mean index of $x_0$ is an even integer, hence, by }Theorem \ref{thm:zeromean}, we have $\cz(\psi_t,y) = \Delta(\psi_t,y)$ for every fixed point $y$ with even mean index equal to $\Delta(\psi_t,x_0)$. Since $\hfn (\psi_t) = 0$, there must exist at least one fixed point $x_1$ such that $\vert \cz(\psi_t,x_1)-\cz(\psi_t,x_0) \vert = 1$, and hence, $\Delta(\psi_t,x_1)\neq\Delta(\psi_t,x_0)$. Let
\begin{equation}\label{eq:delta}
0<\delta<\min \left\{\vert \Delta(\psi_t,x)-\Delta(\psi_t,x_0) \vert\,;x\in\fix_0(\psi_t), \Delta(\psi_t,x)\neq\Delta(\psi_t,x_0)\right\}.
\end{equation}

Let $p_0$ be a prime number such that $\psi^p$ is non-degenerate for all prime $p\geq p_0$ and $p_0 \delta > 3$, where $\delta>0$ is defined as in \eqref{eq:delta}. 

Suppose that $\fix_0(\psi_t)=\fix_0(\psi_t^p)$ for a prime number $p\geq p_0$. Since $\delta p_0 >3$, the indices relation \eqref{reldeltacz} implies there are no orbits $\psi_t^p(y^{(p)})$ with $\vert \cz(\psi_t^p,y^{(p)})-\cz(\psi_t^p,x_0^{(p)}) \vert = 1$. This contradicts $\hfn(\psi_t^p) = 0$. Therefore, $\fix_0(\psi_t)\subsetneq\fix_0(\psi_t^p)$. \qedsymbol
	
\enlargethispage{2\baselineskip}
\bibliographystyle{alpha}
\bibliography{biblio}

\blfootnote{AMS subject codes: 53D40, 37J99}
\end{document}